\documentclass[12pt]{article}
\usepackage{amsmath,amssymb,amsthm,comment}

\newtheorem{theorem}{Theorem}[section]
\newtheorem{thmy}{Theorem}

\newtheorem{corollary}[theorem]{Corollary}

\newtheorem{example}[theorem]{Example}

\newcommand{\dd}{\displaystyle }

\def\barr{\begin{array}}
\def\earr{\end{array}}

\title{Counting conjugacy classes and automorphism-conjugacy classes of ZM-groups}
\author{Marius T\u arn\u auceanu}
\date{September 21, 2025}

\begin{document}

\maketitle

\begin{abstract}
In this paper, we count the number of conjugacy and automorphism-conjugacy classes of elements of a ZM-group. The size of a conjugacy class with respect to these two equivalence relations is also counted.
\end{abstract}

{\small
\noindent
{\bf MSC2020\,:} Primary 20D60; Secondary 20D45, 11A25.

\noindent
{\bf Key words\,:} conjugacy class, automorphism-conjugacy class, ZM-group, group action, Burnside's lemma, Menon's identity.}

\section{Introduction}

Given a finite group $G$, we denote by ${\rm Inn}(G)$ and ${\rm Aut}(G)$ the inner automorphism group and the automorphism group of $G$. Then the orbits of $G$ under the natural actions of ${\rm Inn}(G)$ and ${\rm Aut}(G)$ are called the \textit{conjugacy classes} and the \textit{automorphism-conjugacy classes} of $G$. They are very useful in the study of groups, especially in representation theory. The numbers of these orbits are usually denoted by $k(G)$ and $k'(G)$, respectively, and represent two important invariants of $G$. Note that $k(G)$ is equal to the number of complex irreducible characters of $G$.

In what follows, we will determine these numbers and the sizes of conjugacy classes for ZM-groups. Recall that a \textit{ZM-group} is a finite group with all Sylow subgroups cyclic. By \cite{5}, such a group is of type
\begin{equation}
{\rm ZM}(m,n,r)=\langle a, b \mid a^m = b^n = 1,
\hspace{1mm}b^{-1} a b = a^r\rangle, \nonumber
\end{equation}
where the triple $(m,n,r)$ satisfies the conditions
\begin{equation}
(m,n)=(m,r-1)=1 \mbox{ and } r^n
\equiv 1 \hspace{1mm}({\rm mod}\hspace{1mm}m). \nonumber
\end{equation}\newpage \noindent It is clear that $|{\rm ZM}(m,n,r)|=mn$ and $Z({\rm ZM}(m,n,r))=\langle b^d\rangle$, where $d$ is the multiplicative order of
$r$ modulo $m$, i.e.
\begin{equation}
d=o_m(r)={\rm min}\{k\in\mathbb{N}^* \mid r^k\equiv 1 \hspace{1mm}({\rm mod} \hspace{1mm}m)\}.\nonumber
\end{equation}The subgroups of ${\rm ZM}(m,n,r)$ have been completely described
in \cite{2}. Set
\begin{equation}
L=\left\{(m_1,n_1,s)\in\mathbb{N}^3 \hspace{1mm}\mid\hspace{1mm}
m_1|m,\hspace{1mm} n_1|n,\hspace{1mm} s<m_1,\hspace{1mm}
m_1|s\frac{r^n-1}{r^{n_1}-1}\right\}.\nonumber
\end{equation}Then there is a bijection between $L$ and the subgroup lattice
$L({\rm ZM}(m,n,r))$ of ${\rm ZM}(m,n,r)$, namely the function
that maps a triple $(m_1,n_1,s)\in L$ to the subgroup $H_{(m_1,n_1,s)}$ defined by
\begin{equation}
H_{(m_1,n_1,s)}=\bigcup_{k=1}^{\frac{n}{n_1}}(b^{n_1}a^s)^k\langle a^{m_1}\rangle=\langle a^{m_1},b^{n_1}a^s\rangle.\nonumber
\end{equation}Note that:
\begin{itemize}
\item[-] $|H_{(m_1,n_1,s)}|=\frac{mn}{m_1n_1}$\,, for any $s$ satisfying $(m_1,n_1,s)\in L$;
\item[-] $H_{(m_1,n_1,s)}$ is normal in ${\rm ZM}(m,n,r)$ if and only if $m_1\mid r^{n_1}-1$ and $s=0$ (see \cite{10});
\item[-] $H_{(m_1,n_1,s)}$ is cyclic if and only if $\frac{m}{m_1}\mid r^{n_1}-1$ (see \cite{11});
\item[-] two subgroups of ${\rm ZM}(m,n,r)$ are conjugate if and only if they have the same order (see \cite{1}).
\end{itemize}

Our approach uses some basic tools of group action theory, such as the Orbit-Stabilizer theorem and the Burnside's lemma, and involves modular arithmetic.

\bigskip\noindent{\bf Orbit-Stabilizer theorem.}
{\it Let $G$ be a finite group acting on a finite set $X$ and $x\in X$. Denote by
\begin{equation}
O_x=\{g\circ x \mid g\in G\} \mbox{ and } Stab_G(x)=\{g\in G \mid g\circ x=x\}\nonumber
\end{equation}the orbit and the stabilizer of $x$ in $G$. Then 
\begin{equation}
|O_x|=[G:Stab_G(x)]=\frac{|G|}{|Stab_G(x)|}\,.\nonumber
\end{equation}}

\bigskip\noindent{\bf Burnside's lemma.}
{\it Let $G$ be a finite group acting on a finite set $X$ and
\begin{equation}
Fix(g)=\{x\in X \mid g\circ x=x\}, \mbox{ for } g\in G.\nonumber
\end{equation}Then the number of distinct orbits is
\begin{equation}
\frac{1}{|G|}\sum_{g\in G}|Fix(g)|.\nonumber
\end{equation}}
 
Most of our notations are standard and will not be repeated here. Basic definitions and results on group theory can be found in \cite{5,9}. For number theoretic notions we refer the reader to \cite{6,8}.

\section{Main results}

Throughout this section we will use $G$ to denote ${\rm ZM}(m,n,r)$. The general form of automorphisms of a metacyclic group had been determined in the main result of \cite{3}. For ZM-groups, in \cite{4} we obtained the following:

\begin{theorem}
Each automorphism of ${\rm ZM}(m,n,r)$ is given by
\begin{equation}
b^ua^v\mapsto b^{yu}a^{x_1v+x_2[u]_r},\, u,v\geq 0,\nonumber
\end{equation}for a unique triple of integers $(x_1,x_2,y)$ such that
\begin{equation}
0\leq x_1,x_2<m,\, (x_1,m)=1,\, 0\leq y<n \mbox{ and } y\equiv 1 \hspace{1mm}({\rm mod}\hspace{1mm}d),
\end{equation}where
\begin{equation}
[u]_r=\left\{\barr{lll}
    \!\!1+r+\cdots+r^{u-1},&u>0\\
    \!\!0,&u=0\earr\right..\nonumber
\end{equation}In particular, we have
\begin{equation}
|{\rm Aut}({\rm ZM}(m,n,r))|=m\varphi(m)\frac{n}{d}\,.\nonumber
\end{equation}
\end{theorem}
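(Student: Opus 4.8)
The plan is to reduce the whole problem to the images of the two generators $a$ and $b$, and then to extract the constraints on those images from the defining relations. First I would pin down $\phi(a)$ by showing that $\langle a\rangle$ is characteristic. The cleanest route is to describe it intrinsically: writing a general element in normal form $b^u a^v$ with $0\le u<n$, $0\le v<m$, one has $(b^u a^v)^m=1$ if and only if $n\mid um$ in the quotient $G/\langle a\rangle\cong\mathbb{Z}/n\mathbb{Z}$, and since $(m,n)=1$ this forces $u=0$. Hence $\langle a\rangle=\{g\in G:g^m=1\}$, a description manifestly preserved by every automorphism. Consequently any $\phi\in{\rm Aut}(G)$ restricts to an automorphism of $\langle a\rangle\cong\mathbb{Z}/m\mathbb{Z}$, so $\phi(a)=a^{x_1}$ with $(x_1,m)=1$, while the normal form lets me write $\phi(b)=b^y a^{x_2}$ for unique $0\le y<n$ and $0\le x_2<m$. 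This already yields the uniqueness of the triple $(x_1,x_2,y)$.

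Next I would impose the relation $b^{-1}ab=a^r$. Since powers of $a$ commute, only the $b^y$ part of $\phi(b)=b^y a^{x_2}$ acts under conjugation, giving $\phi(b)^{-1}\phi(a)\phi(b)=a^{x_1 r^y}$, which must equal $\phi(a)^r=a^{x_1 r}$. As $(x_1,m)=1$ and $r$ is a unit modulo $m$, this forces $r^{y-1}\equiv 1\pmod m$, i.e. $y\equiv 1\pmod d$ with $d=o_m(r)$ — exactly the stated congruence. For the converse I would verify the order relations: $\phi(a)^m=a^{x_1 m}=1$ is automatic, and $\phi(b)^n=(b^y a^{x_2})^n=b^{yn}a^{x_2[n]_{r^y}}$ with $b^{yn}=1$; the congruence $y\equiv 1\pmod d$ gives $r^{jy}\equiv r^j\pmod m$, so $[n]_{r^y}\equiv[n]_r\pmod m$, while $(r-1)[n]_r=r^n-1\equiv 0\pmod m$ together with $(m,r-1)=1$ yields $[n]_r\equiv 0\pmod m$. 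Thus $\phi(b)^n=1$ for every $x_2$. That same congruence converts the raw expansion $(b^y a^{x_2})^u=b^{yu}a^{x_2[u]_{r^y}}$ into $b^{yu}a^{x_2[u]_r}$, and combining with $\phi(a)^v=a^{x_1 v}$ produces the closed form $\phi(b^u a^v)=b^{yu}a^{x_1 v+x_2[u]_r}$.

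Finally I would count. Once the correspondence between automorphisms and admissible triples is established, the order of ${\rm Aut}(G)$ is simply the number of such triples: $\varphi(m)$ choices for $x_1$, $m$ choices for $x_2$, and, among $0\le y<n$, the residues with $y\equiv 1\pmod d$ number $n/d$, giving $m\varphi(m)\,\frac{n}{d}$.

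I expect the genuine obstacle to lie precisely in the sufficiency direction for $y$. Preserving the relations only guarantees that each admissible triple yields an \emph{endomorphism}; to conclude it is an \emph{automorphism} one must check surjectivity, equivalently that $\phi(b)$ has order exactly $n$. But the image is $\langle\phi(a),\phi(b)\rangle=\langle a,b^y\rangle$, whose order is $m\cdot n/\gcd(y,n)$, so bijectivity seems to demand $\gcd(y,n)=1$ in addition to $y\equiv 1\pmod d$. Reconciling this bijectivity requirement with the congruence $y\equiv 1\pmod d$ — and confirming that the admissible $y$ really do tally to the factor $n/d$ — is the step I would scrutinize most carefully, since it is exactly here that the interaction between the multiplicative order $d$ and the prime-divisor structure of $n$ enters, and where I would expect the delicate part of the argument to reside.
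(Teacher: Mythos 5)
First, a point of reference: the paper never proves this theorem --- it is quoted from \cite{3} and \cite{4} --- so there is no internal proof to compare against; your attempt has to be judged on its own. Up to your final paragraph it is correct and complete: the identification $\langle a\rangle=\{g\in G : g^m=1\}$ (valid since $(m,n)=1$) shows $\langle a\rangle$ is characteristic, the normal form yields the unique triple $(x_1,x_2,y)$, the relation $b^{-1}ab=a^r$ forces $y\equiv 1\ ({\rm mod}\ d)$, and your verification that every admissible triple satisfies the defining relations, hence yields an endomorphism $b^ua^v\mapsto b^{yu}a^{x_1v+x_2[u]_r}$, is sound.

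The obstacle you flag at the end, however, is not merely the ``delicate part'' --- it is fatal, and your proof cannot be completed, because the statement as printed is false. The congruence $y\equiv 1\ ({\rm mod}\ d)$ controls $y$ only at primes dividing $d$, not at primes dividing $n$ but not $d$. Concretely, take $G={\rm ZM}(7,6,2)$: the conditions $(7,6)=(7,1)=1$ and $2^6\equiv 1\ ({\rm mod}\ 7)$ hold, and $d=o_7(2)=3$. The triple $(x_1,x_2,y)=(1,0,4)$ is admissible since $4\equiv 1\ ({\rm mod}\ 3)$, but the associated map $b^ua^v\mapsto b^{4u}a^v$ is an endomorphism whose image $\langle a,b^4\rangle$ has order $21$, so it is not an automorphism. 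Since the correspondence from automorphisms to triples is injective (you proved this), and exactly the $42$ triples with $y=1$ give automorphisms here, $|{\rm Aut}(G)|=42$, whereas the formula claims $m\varphi(m)\frac{n}{d}=84$; indeed $G\cong F_{21}\times C_2$ and ${\rm Aut}(F_{21}\times C_2)\cong{\rm Aut}(F_{21})$ has order $42$. The missing condition is exactly the one you identified, $(y,n)=1$; the number of $y$ with $0\le y<n$, $(y,n)=1$, $y\equiv 1\ ({\rm mod}\ d)$ is $\varphi(n)/\varphi(d)$ (the reduction map $(\mathbb{Z}/n\mathbb{Z})^{\times}\to(\mathbb{Z}/d\mathbb{Z})^{\times}$ is surjective with kernel of that size), so the correct count is $m\varphi(m)\varphi(n)/\varphi(d)$. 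This coincides with $m\varphi(m)\frac{n}{d}$ precisely when every prime divisor of $n$ divides $d$ --- a condition satisfied in all of the paper's worked cases ($n$ prime, where $d=n$; $D_{2m}$; ${\rm Dic}_3$ with $n=4$, $d=2$), which is why the discrepancy never surfaces there, though it silently propagates into Theorems 2.2 and 2.4, whose Burnside sums run over all of $A$. So: your parametrization, your necessity argument, and your endomorphism converse are all right, and the honest conclusion of your approach is the corrected formula, not the stated one.
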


We are now able to compute the number of automorphism-conjugacy classes of $G$.

\begin{theorem}
The following equality holds:
\begin{equation}
k'({\rm ZM}(m,n,r))=\dd\frac{d}{m\varphi(m)}\dd\sum_{(x_1,x_2,y)\in A}\frac{(m,x_1-1)}{\left[\frac{n}{(n,y-1)}\,, o_{\frac{(m,x_1-1)}{(m,x_1-1,x_2)}}(r)\right]}\,,
\end{equation}where $A$ is the set of triples $(x_1,x_2,y)$ satisfying the conditions (1).
\end{theorem}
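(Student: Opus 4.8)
The plan is to apply Burnside's lemma to the natural action of ${\rm Aut}(G)$ on $G$ and to count, for each automorphism, its set of fixed points. By Theorem 1 the automorphisms are parametrized by the triples $(x_1,x_2,y)\in A$, and $|{\rm Aut}(G)|=m\varphi(m)\frac{n}{d}$, so Burnside's lemma gives
\[
k'(G)=\frac{d}{m\varphi(m)n}\sum_{(x_1,x_2,y)\in A}\bigl|{\rm Fix}(x_1,x_2,y)\bigr|,
\]
where ${\rm Fix}(x_1,x_2,y)$ denotes the fixed-point set of the corresponding automorphism $\phi$. Everything then reduces to evaluating $|{\rm Fix}(x_1,x_2,y)|$ and checking that the leftover factor of $n$ cancels against the summand.

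To count fixed points I would first use that every element of $G$ has a unique normal form $b^ua^v$ with $0\le u<n$ and $0\le v<m$ (since $\langle a\rangle\trianglelefteq G$, $\langle a\rangle\cap\langle b\rangle=1$, and $(m,n)=1$). Applying the formula of Theorem 1, $\phi(b^ua^v)=b^{yu}a^{x_1v+x_2[u]_r}$, and by uniqueness of the normal form the equation $\phi(b^ua^v)=b^ua^v$ is equivalent to the pair of congruences
\[
(y-1)u\equiv 0\ ({\rm mod}\ n)\quad\text{and}\quad (x_1-1)v+x_2[u]_r\equiv 0\ ({\rm mod}\ m).
\]
I would count the solutions by fixing $u$ first. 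The first congruence holds for exactly those $u\in\{0,\dots,n-1\}$ divisible by $\frac{n}{(n,y-1)}$. For each such $u$, the second is a linear congruence in $v$ having $(m,x_1-1)$ solutions when $(m,x_1-1)\mid x_2[u]_r$ and none otherwise; hence $|{\rm Fix}(x_1,x_2,y)|=(m,x_1-1)\cdot c$, where $c$ counts the $u$ satisfying both the first congruence and the solvability condition.

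The number-theoretic heart of the argument --- and the step I expect to be the main obstacle --- is rewriting the solvability condition $(m,x_1-1)\mid x_2[u]_r$ as a clean order condition on $u$. Setting $g=\frac{(m,x_1-1)}{(m,x_1-1,x_2)}$ and cancelling the common factor $(m,x_1-1,x_2)$, the condition becomes $g\mid[u]_r$; then, using the identity $(r-1)[u]_r=r^u-1$ together with $g\mid m$ and $(m,r-1)=1$, which force $(g,r-1)=1$, one obtains $g\mid[u]_r\iff r^u\equiv 1\ ({\rm mod}\ g)\iff o_g(r)\mid u$. The care needed here lies in the coprimality bookkeeping and the boundary cases $u=0$ and $g=1$. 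Combining this with $\frac{n}{(n,y-1)}\mid u$, and noting that both $\frac{n}{(n,y-1)}$ and $o_g(r)$ divide $n$ (the latter because $r^n\equiv 1\ ({\rm mod}\ m)$), the admissible $u$ are precisely the multiples of $\bigl[\frac{n}{(n,y-1)},o_g(r)\bigr]$ in $\{0,\dots,n-1\}$, of which there are $n/\bigl[\frac{n}{(n,y-1)},o_g(r)\bigr]$. Substituting $|{\rm Fix}(x_1,x_2,y)|=(m,x_1-1)\cdot n/\bigl[\frac{n}{(n,y-1)},o_g(r)\bigr]$ into the Burnside sum cancels the factor $n$ and yields exactly formula (2).
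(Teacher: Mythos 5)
Your proposal is correct, and it reaches the paper's formula by a mildly but genuinely different route. Both you and the paper apply Burnside's lemma to the action of ${\rm Aut}(G)$ on $G$ and both hinge on the same number-theoretic pivot: the solvability of the linear congruence $(x_1-1)v\equiv -x_2[u]_r \ ({\rm mod}\ m)$ is equivalent to $(m,x_1-1)\mid x_2[u]_r$, which (after cancelling $(m,x_1-1,x_2)$ and using $(m,r-1)=1$ to pass from $[u]_r$ to $r^u-1$) becomes $o_g(r)\mid u$ with $g=\frac{(m,x_1-1)}{(m,x_1-1,x_2)}$. The difference is in how the fixed-point set is measured. The paper uses the structural fact that ${\rm Fix}(\varphi_{(x_1,x_2,y)})$ is a subgroup, hence of the form $H_{(m_1,n_1,s)}$ in Calhoun's classification of subgroups of ZM-groups; it then pins down $m_1=\frac{m}{(m,x_1-1)}$ and $n_1=\left[\frac{n}{(n,y-1)},o_g(r)\right]$ by minimality and reads off $|{\rm Fix}|=\frac{mn}{m_1n_1}$ from the known order of $H_{(m_1,n_1,s)}$. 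You instead count fixed elements directly: unique normal form $b^ua^v$, the two congruences $n\mid(y-1)u$ and $m\mid(x_1-1)v+x_2[u]_r$, exactly $(m,x_1-1)$ admissible $v$ for each admissible $u$, and $n/\left[\frac{n}{(n,y-1)},o_g(r)\right]$ admissible $u$ since both $\frac{n}{(n,y-1)}$ and $o_g(r)$ divide $n$. Your version is more elementary and self-contained (it needs only Theorem 1 and the normal form, not the subgroup lattice description cited from the literature), while the paper's version buys the extra structural information that the fixed-point set is literally one of the subgroups $H_{(m_1,n_1,s)}$, with the parameters $m_1,n_1$ identified explicitly, which it reuses in the remarks following the theorem. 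Your handling of the delicate points (the count of solutions of the linear congruence, the boundary cases $u=0$ and $g=1$, and the divisibility $o_g(r)\mid n$ coming from $r^n\equiv 1\ ({\rm mod}\ m)$) is accurate, so the argument is complete.
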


\begin{proof}
We will apply Burnside's lemma to the natural action of $${\rm Aut}(G)=\{\varphi_{(x_1,x_2,y)}\mid (x_1,x_2,y)\in A\}$$on $G$. Let $(x_1,x_2,y)\in A$ and
\begin{equation}
Fix(\varphi_{(x_1,x_2,y)})=H_{(m_1,n_1,s)}=\langle a^{m_1},\, b^{n_1}a^s\rangle,\nonumber
\end{equation}where $(m_1,n_1,s)\in L$. Since $H_{(m_1,n_1,s)}$ is the join of its subgroups $\langle a^{m_1}\rangle$ and $\langle b^{n_1}a^s\rangle$, $m_1|m$ and $n_1|n$ are minimal with the property that there is $0\leq s<m_1$ such that $m_1|s\frac{[n]_r}{[n_1]_r}$ and both $a^{m_1}$ and $b^{n_1}a^s$ are fixed points of $\varphi_{(x_1,x_2,y)}$. We have
\begin{equation}
a^{m_1}\!\in\! Fix(\varphi_{(x_1,x_2,y)})\!\Leftrightarrow\! a^{x_1m_1}\!\!=a^{m_1}\!\Leftrightarrow\! m\,|\,m_1(x_1-1)\!\Leftrightarrow\!\frac{m}{(m,x_1-1)}\,|\,m_1,\nonumber
\end{equation}which leads to $m_1=\frac{m}{(m,x_1-1)}$\,, and
\begin{equation}
b^{n_1}a^s\!\in\! Fix(\varphi_{(x_1,x_2,y)})\!\Leftrightarrow\! b^{yn_1}a^{x_1s+x_2[n_1]_r}\!\!=b^{n_1}a^s\!\Leftrightarrow\!\left\{\barr{lll}
\!\!n\,|\,n_1(y-1)\\
\!\!m\,|\,s(x_1-1)+x_2[n_1]_r\earr\right..\nonumber
\end{equation}Thus $n_1$ is minimal such that $\frac{n}{(n,y-1)}\,|\,n_1$ and there is $0\leq s<m_1$ with
\begin{equation}
\left\{\barr{lll}
\!\!m\,|\,s(x_1-1)\frac{[n]_r}{[n_1]_r}\\
\!\!m\,|\,s(x_1-1)+x_2[n_1]_r\earr\right..\nonumber
\end{equation}Since 
\begin{equation}
m\,|\,s(x_1-1)+x_2[n_1]_r\Rightarrow m\,|\,s(x_1-1)\frac{[n]_r}{[n_1]_r}\,,\nonumber
\end{equation}it follows that $n_1$ is minimal such that $\frac{n}{(n,y-1)}\,|\,n_1$ and
\begin{equation}
\!\!m\,|\,s(x_1-1)+x_2[n_1]_r\nonumber
\end{equation}for some $0\leq s<m_1$, i.e. the congruence
\begin{equation}
(x_1-1)s\equiv -x_2[n_1]_r\, ({\rm mod}\, m),\nonumber
\end{equation}has solutions. This is equivalent to
\begin{equation}
(m,x_1-1)\,|\,x_2[n_1]_r,\nonumber
\end{equation}i.e.
\begin{equation}
\frac{(m,x_1-1)}{(m,x_1-1,x_2)}\,|\,r^{n_1}-1,\nonumber
\end{equation}which means that
\begin{equation}
o_{\frac{(m,x_1-1)}{(m,x_1-1,x_2)}}(r)\,|\,n_1.\nonumber
\end{equation}Thus
\begin{equation}
n_1=\left[\frac{n}{(n,y-1)}\,, o_{\frac{(m,x_1-1)}{(m,x_1-1,x_2)}}(r)\right]\nonumber
\end{equation}and
\begin{equation}
|Fix(\varphi_{(x_1,x_2,y)})|=\frac{mn}{m_1n_1}=\frac{n(m,x_1-1)}{\left[\frac{n}{(n,y-1)}\,, o_{\frac{(m,x_1-1)}{(m,x_1-1,x_2)}}(r)\right]}.\nonumber
\end{equation}Then we get
\begin{align*}
k'(G)&=\dd\frac{d}{mn\varphi(m)}\dd\sum_{(x_1,x_2,y)\in A}\frac{n(m,x_1-1)}{\left[\frac{n}{(n,y-1)}\,, o_{\frac{(m,x_1-1)}{(m,x_1-1,x_2)}}(r)\right]}\\
&=\dd\frac{d}{m\varphi(m)}\dd\sum_{(x_1,x_2,y)\in A}\frac{(m,x_1-1)}{\left[\frac{n}{(n,y-1)}\,, o_{\frac{(m,x_1-1)}{(m,x_1-1,x_2)}}(r)\right]}\,,\nonumber
\end{align*}as desired.
\end{proof}

In the notation in the proof of Theorem 2.2, we observe that
\begin{equation}
\frac{n}{(n,y-1)}\leq n_1\leq n,\nonumber
\end{equation}which leads to
\begin{equation}
(m,x_1-1)\leq |Fix(\varphi_{(x_1,x_2,y)})|\leq (m,x_1-1)(n,y-1).\nonumber
\end{equation}By summing over all $(x_1,x_2,y)\in A$ and using the well-known Menon's identity \cite{7}, we get
\begin{equation}
\tau(m)\leq k'(G)\leq\frac{d^2}{n}f\left(\frac{n}{d}\right)\tau(m),\nonumber
\end{equation}where $f$ is the arithmetic function defined by $$f(\alpha)=\sum_{\beta=0}^{\alpha-1}(\alpha,\beta).$$Note that if $\alpha=p_1^{\varepsilon_1}\cdots p_k^{\varepsilon_k}$ is the decomposition of $\alpha$ as a product of prime factors, then $$f(\alpha)=\alpha\prod_{i=1}^k\left(\varepsilon_i+1-\frac{\varepsilon_i}{p_i}\right).$$
\smallskip

A simple formula for $k'(G)$ is obtained if $n$ is a prime number.

\begin{corollary}
If $n$ is a prime number, then the following equality holds:
\begin{equation}
k'({\rm ZM}(m,n,r))=n-1+\tau(m).
\end{equation}
\end{corollary}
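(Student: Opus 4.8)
The plan is to specialize the formula of Theorem 2.2 to the case where $n$ is prime, reducing the triple sum over $A$ to a double sum that Menon's identity can evaluate. First I would use that $d=o_m(r)$ divides $n$: since $n$ is prime, $d\in\{1,n\}$, and $d=1$ would give $r\equiv 1\ (\mathrm{mod}\ m)$, whence $m\mid r-1$; together with $(m,r-1)=1$ this forces $m=1$. Discarding this degenerate abelian case, we have $d=n$. Then the congruence $y\equiv 1\ (\mathrm{mod}\ d)$ together with $0\le y<n$ pins down $y=1$, so $(n,y-1)=n$ and $\frac{n}{(n,y-1)}=1$. Consequently the least common multiple in the denominator collapses to $o_{(m,x_1-1)/(m,x_1-1,x_2)}(r)$, leaving
$$k'(G)=\frac{n}{m\varphi(m)}\sum_{\substack{0\le x_1<m\\ (x_1,m)=1}}\ \sum_{0\le x_2<m}\frac{(m,x_1-1)}{o_{(m,x_1-1)/(m,x_1-1,x_2)}(r)}.$$

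The key observation, and the step I expect to matter most, is a dichotomy for these orders: for every divisor $e$ of $m$ one has $o_e(r)\in\{1,n\}$, with $o_e(r)=1$ precisely when $e=1$. Indeed $r^n\equiv 1\ (\mathrm{mod}\ m)$ gives $r^n\equiv 1\ (\mathrm{mod}\ e)$, so $o_e(r)\mid n$ and hence (as $n$ is prime) equals $1$ or $n$; and $o_e(r)=1$ means $e\mid r-1$, which with $e\mid m$ and $(m,r-1)=1$ forces $e=1$. Writing $g=(m,x_1-1)$ and $e=g/(g,x_2)$, the condition $e=1$ is exactly $g\mid x_2$. Thus in the inner sum the $m/g$ values of $x_2$ divisible by $g$ each contribute $g$, while the remaining $m-m/g$ values each contribute $g/n$, giving inner sum $m+\frac{m(g-1)}{n}$. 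Getting this split and its counts right is the only genuinely delicate combinatorial point.

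Finally I would sum over $x_1$ coprime to $m$. The constant term $m$ yields $m\varphi(m)$, and the remaining part needs $\sum_{(x_1,m)=1}(m,x_1-1)$, which is exactly Menon's identity $\varphi(m)\tau(m)$. Assembling,
$$\sum_{\substack{0\le x_1<m\\ (x_1,m)=1}}\left[m+\frac{m\bigl((m,x_1-1)-1\bigr)}{n}\right]=m\varphi(m)+\frac{m\varphi(m)}{n}\bigl(\tau(m)-1\bigr),$$
and multiplying by $\frac{n}{m\varphi(m)}$ collapses everything to $n+\tau(m)-1$, as claimed. The substantive ingredients are the order dichotomy (where both $(m,r-1)=1$ and the primality of $n$ are essential) and the single invocation of Menon's identity; the rest is routine bookkeeping.
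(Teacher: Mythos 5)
Your proposal is correct and follows essentially the same route as the paper's own proof: specialize Theorem 2.2 with $d=n$, $y=1$, use the dichotomy $o_{(m,x_1-1)/(m,x_1-1,x_2)}(r)\in\{1,n\}$ according to whether $(m,x_1-1)\mid x_2$, split the inner sum over $x_2$ accordingly, and finish with Menon's identity. The only difference is cosmetic bookkeeping (you carry the factor $n$ in the prefactor rather than in the fixed-point counts), plus two justifications the paper leaves implicit — why $d=n$ (excluding the degenerate $m=1$ case) and why the order dichotomy holds via $(m,r-1)=1$ — both of which are welcome.
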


\begin{proof}
Since $n$ is prime, we obtain $d=n$ and $y=1$. By (2), it follows that
\begin{equation}
k'(G)=\dd\frac{d}{m\varphi(m)}\dd\sum_{^{0\leq x_1,x_2<m}_{\ (x_1,m)=1}}\frac{(m,x_1-1)}{o_{\frac{(m,x_1-1)}{(m,x_1-1,x_2)}}(r)}\,.\nonumber
\end{equation}Moreover, we have
\begin{equation}
n_1=o_{\frac{(m,x_1-1)}{(m,x_1-1,x_2)}}(r)=\left\{\barr{lll}
\!\!1, &\mbox{  if }\, (m,x_1-1)\mid x_2\\
\!\!n, &\mbox{  if }\, (m,x_1-1)\nmid x_2\earr\right.\nonumber
\end{equation}and so
\begin{equation}
|Fix(\varphi_{(x_1,x_2,y)})|=\left\{\barr{lll}
\!\!n(m,x_1-1), &\mbox{  if }\, (m,x_1-1)\mid x_2\\
\!\!(m,x_1-1), &\mbox{  if }\, (m,x_1-1)\nmid x_2\earr\right.\nonumber
\end{equation}Then
\begin{align*}
k'(G)&=\dd\frac{1}{m\varphi(m)}\!\!\dd\sum_{^{\ 0\leq x_1<m}_{\ (x_1,m)=1}}\!\!\left(\,\dd\sum_{0\leq x_2<m}|Fix(\varphi_{(x_1,x_2,y)})|\right)
\end{align*}
\begin{align*}
&=\dd\frac{1}{m\varphi(m)}\!\!\dd\sum_{^{\ 0\leq x_1<m}_{\ (x_1,m)=1}}\!\!\left(\,\dd\sum_{^{\ 0\leq x_2<m}_{(m,x_1-1)\mid x_2}}\!\!\!n(m,x_1-1)+\!\!\!\!\!\!\dd\sum_{^{\ 0\leq x_2<m}_{(m,x_1-1)\nmid x_2}}\!\!\!(m,x_1-1)\right)\\
&=\dd\frac{1}{m\varphi(m)}\!\!\dd\sum_{^{\ 0\leq x_1<m}_{\ (x_1,m)=1}}\!\!\left[mn+(m,x_1-1)\left(m-\frac{m}{(m,x_1-1)}\right)\right]\\
&=\dd\frac{1}{m\varphi(m)}\left(mn\varphi(m)+m\!\!\!\dd\sum_{^{\ 0\leq x_1<m}_{\ (x_1,m)=1}}(m,x_1-1)-m\varphi(m)\right)\\
&=\dd\frac{1}{m\varphi(m)}\left(mn\varphi(m)+m\varphi(m)\tau(m)-m\varphi(m)\right)\\
&=n-1+\tau(m),
\end{align*}completing the proof.
\end{proof}

In particular, for $n=2$ and $r=m-1$ we have ${\rm ZM}(m,n,r)\cong D_{2m}$, the dihedral group of order $2m$. Thus, the equality (3) becomes
\begin{equation}
k'(D_{2m})=\tau(m)+1, \mbox{ for all odd integers } m\geq 3.
\end{equation}

An example of applying the formula (2) is presented in the following.

\begin{example}
For the dicyclic group ${\rm Dic}_3={\rm ZM}(3,4,2)={\rm SmallGroup}(12,1)$, we have $d=2$ and (2) leads to
\begin{equation}
k'({\rm Dic}_3)=5.\nonumber
\end{equation}
\end{example}

Next, we will focus on computing the number of conjugacy classes of $G$.

\begin{theorem}
The following equality holds:
\begin{equation}
k({\rm ZM}(m,n,r))=\dd\frac{n}{md}\dd\sum_{\alpha=0}^{d-1}\dd\sum_{\beta=0}^{m-1}\frac{(m,r^{\alpha}-1)}{o_{\frac{(m,r^{\alpha}-1)}{(m,r^{\alpha}-1,\beta)}}(r)}\,.
\end{equation}
\end{theorem}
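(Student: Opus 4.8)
The plan is to invoke Burnside's lemma for the conjugation action of $G$ on itself, exactly as in the proof of Theorem 2.2 but with $\mathrm{Inn}(G)$ in place of $\mathrm{Aut}(G)$. Since conjugation by $g$ fixes $x$ precisely when $x\in C_G(g)$, Burnside's lemma gives $k(G)=\frac{1}{|G|}\sum_{g\in G}|C_G(g)|$, where $g=b^\alpha a^\beta$ ranges over $0\le\alpha<n$ and $0\le\beta<m$. The key idea is that each centralizer is the fixed-point set of an inner automorphism, so the fixed-point count already established can be reused verbatim.

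First I would identify each $c_g$ with an automorphism $\varphi_{(x_1,x_2,y)}$ in the normal form of Theorem 2.1. Using the relation $a^\beta b^u=b^u a^{r^u\beta}$, one computes $c_{b^\alpha a^\beta}(b^u a^v)=b^u a^{r^{-\alpha}v+r^{-\alpha}(r^u-1)\beta}$, so that $c_{b^\alpha a^\beta}$ has parameters $x_1=r^{-\alpha}$, $x_2=r^{-\alpha}(r-1)\beta$ and, crucially, $y=1$. The vanishing of $y-1$ reflects the fact that conjugation preserves the $b$-exponent (because $\langle a\rangle\trianglelefteq G$ with abelian quotient), and it is precisely what makes this theorem a specialization of the previous one.

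Next I would feed $y=1$ into the fixed-point formula from the proof of Theorem 2.2. Then $(n,y-1)=n$, the inner lcm collapses, and $|C_G(b^\alpha a^\beta)|=|Fix(\varphi_{(x_1,x_2,1)})|=\frac{n(m,x_1-1)}{o_{(m,x_1-1)/(m,x_1-1,x_2)}(r)}$. The remaining work is to rewrite the two gcd data in terms of $\alpha$ and $\beta$. Because $r$ is a unit modulo $m$, multiplying by $r^\alpha$ yields $(m,x_1-1)=(m,r^{-\alpha}-1)=(m,r^\alpha-1)$; and writing $e=(m,r^\alpha-1)$, the congruence $r^{-\alpha}\equiv 1\ (\mathrm{mod}\ e)$ together with the ZM condition $(m,r-1)=1$ (which forces $(e,r-1)=1$) gives $(m,x_1-1,x_2)=(e,(r-1)\beta)=(e,\beta)=(m,r^\alpha-1,\beta)$. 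This brings the summand into the exact shape appearing in the statement.

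Finally I would substitute and exploit periodicity. The summand depends on $\alpha$ only through $r^\alpha\bmod m$, which has period $d=o_m(r)$, so the double sum over $0\le\alpha<n$ equals $\frac{n}{d}$ times the same double sum over $0\le\alpha<d$; combined with the prefactor $\frac{1}{|G|}=\frac{1}{mn}$ this produces $\frac{n}{md}$ and the claimed formula. I expect the main obstacle to be the gcd reconciliation in the third step, namely matching $(m,x_1-1,x_2)$ with $(m,r^\alpha-1,\beta)$, since this is where the coprimality hypotheses $(m,r)=(m,r-1)=1$ must be used carefully; the rest is a direct reuse of the Theorem 2.2 computation together with a routine periodicity reduction.
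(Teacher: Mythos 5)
Your proposal is correct and takes essentially the same approach as the paper: identify each inner automorphism with an automorphism $\varphi_{(x_1,x_2,1)}$ in the normal form of Theorem 2.1, reuse the fixed-point formula from the proof of Theorem 2.2 (where $y=1$ collapses the lcm), and apply Burnside's lemma. The only difference is bookkeeping: the paper applies Burnside to ${\rm Inn}(G)$, parameterized by $0\le\alpha<d$, $0\le\beta<m$ via $Z(G)=\langle b^d\rangle$ (and with the convention $\varphi_g:x\mapsto g^{-1}xg$, giving parameters $(r^{\alpha},\beta(1-r),1)$), whereas you sum over all $mn$ elements of $G$ and recover the factor $\frac{n}{d}$ through the periodicity of $r^{\alpha}\bmod m$ --- both routes reduce to the identical computation.
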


\begin{proof}
We have
\begin{equation}
{\rm Inn}(G)=\{\varphi_{b^{\alpha}a^{\beta}}\mid \alpha=\overline{0,d-1}, \beta=\overline{0,m-1}\},\nonumber
\end{equation}where $\varphi_{b^{\alpha}a^{\beta}}$ is the conjugation by $b^{\alpha}a^{\beta}$. In the notation in Theorem 2.2, it is easy to see that
\begin{equation}
\varphi_{b^{\alpha}a^{\beta}}=\varphi_{(r^{\alpha},\beta(1-r),1)}\nonumber
\end{equation}and so
\begin{equation}
|Fix(\varphi_{b^{\alpha}a^{\beta}})|=\frac{n(m,r^{\alpha}-1)}{o_{\frac{(m,r^{\alpha}-1)}{(m,r^{\alpha}-1,\beta)}}(r)}\,.\nonumber
\end{equation}Then (5) follows by applying Burnside's lemma to the natural action of ${\rm Inn}(G)$ on $G$.
\end{proof}

We remark that 
\begin{equation}
o_{\frac{(m,r^{\alpha}-1)}{(m,r^{\alpha}-1,\beta)}}(r)=1\Leftrightarrow\frac{(m,r^{\alpha}-1)}{(m,r^{\alpha}-1,\beta)}=1\Leftrightarrow (m,r^{\alpha}-1)\mid\beta.
\end{equation}Also, for $(m,r^{\alpha}-1)\nmid\beta$ we have
\begin{equation}
\frac{1}{d}\leq \frac{1}{o_{\frac{(m,r^{\alpha}-1)}{(m,r^{\alpha}-1,\beta)}}(r)}\leq\frac{1}{p}\,,
\end{equation}where $p$ is the smallest prime divisor of $d$. Then, writing 
\begin{equation}
k(G)=\dd\frac{n}{md}\dd\sum_{\alpha=0}^{d-1}(m,r^{\alpha}-1)\!\!\left(\dd\sum_{^{0\leq\beta<m-1}_{(m,r^{\alpha}-1)\mid\beta}}\!\!\frac{1}{o_{\frac{(m,r^{\alpha}-1)}{(m,r^{\alpha}-1,\beta)}}(r)}+\!\!\!\!\dd\sum_{^{0\leq\beta<m-1}_{(m,r^{\alpha}-1)\nmid\beta}}\!\!\frac{1}{o_{\frac{(m,r^{\alpha}-1)}{(m,r^{\alpha}-1,\beta)}}(r)}\!\right)\nonumber
\end{equation}and using (6) and (7), we get
\begin{equation}
\frac{n}{d}\left(d-1+\frac{S}{d}\right)\leq k(G)\leq\frac{n}{d}\left(d-\frac{d}{p}+\frac{S}{p}\right),
\end{equation}where $$S=\sum_{\alpha=0}^{d-1}(m,r^{\alpha}-1).$$Note that if $d$ is prime, then $p=d$ and (8) leads to the equality
\begin{equation}
k(G)=\frac{n}{d}\left(d-1+\frac{S}{d}\right).
\end{equation}By considering the natural action of $\mathbb{Z}^{\times}_m$ on $\mathbb{Z}_m$, we have $$S=\sum_{\alpha=0}^{d-1}|Fix(\hat{r}^{\,\alpha})|,$$implying that
\begin{equation}
m+d-1\leq S\leq\sum_{\hat{a}\in\mathbb{Z}^{\times}_m}|Fix(\hat{a})|=\varphi(m)\tau(m).\nonumber
\end{equation}These inequalities, together with (8), show that
\begin{equation}
\frac{n(m+d^2-1)}{d^2}\leq k(G)\leq\frac{n[\varphi(m)\tau(m)+d(p-1)]}{dp}\,.\nonumber
\end{equation}
\smallskip

Again, if $n$ is prime, then $d=n$ is also prime and (9) gives a simple formula for $k(G)$.

\begin{corollary}
If $n$ is a prime number, then the following equality holds:
\begin{equation}
k({\rm ZM}(m,n,r))=n-1+\frac{1}{n}\sum_{\alpha=0}^{n-1}(m,r^{\alpha}-1).\nonumber
\end{equation}
\end{corollary}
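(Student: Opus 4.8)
The plan is to specialize the general count of Theorem 2.5 to the case where $n$ is prime, a setting in which the arithmetic collapses because every relevant multiplicative order is forced to be either $1$ or $n$. Since both the refined formula (9) and the criterion (6) are already available, the corollary should reduce to a one-line substitution once the value of $d$ is pinned down.

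First I would determine $d$. Because $r^n\equiv 1\,({\rm mod}\,m)$, the order $d=o_m(r)$ divides $n$, so primality of $n$ leaves only $d=1$ or $d=n$. The case $d=1$ means $m\mid r-1$, i.e. $(m,r-1)=m$, which the defining ZM condition $(m,r-1)=1$ permits only for $m=1$; and when $m=1$ the group is cyclic of order $n$, where $(m,r^\alpha-1)=1$ for every $\alpha$ and both sides of the asserted identity equal $n$. Hence in the essential range $m\ge 2$ one has $d=n$, and in particular $d$ is prime.

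With $d=n$ prime the work is essentially done: the smallest prime divisor of $d$ is $p=d=n$, so the two bounds in (8) coincide and give the exact value (9), namely $k(G)=\tfrac{n}{d}\bigl(d-1+\tfrac{S}{d}\bigr)$ with $S=\sum_{\alpha=0}^{d-1}(m,r^\alpha-1)$. Substituting $d=n$ yields $k(G)=n-1+\tfrac{1}{n}\sum_{\alpha=0}^{n-1}(m,r^\alpha-1)$, which is exactly the claim. For a self-contained derivation straight from (5) I would instead note that each order $o_{(m,r^\alpha-1)/(m,r^\alpha-1,\beta)}(r)$ divides $d=n$ and so equals $1$ or $n$, being $1$ precisely when $(m,r^\alpha-1)\mid\beta$ by (6); splitting the inner sum over $\beta$ along this dichotomy—the $m/(m,r^\alpha-1)$ multiples of $(m,r^\alpha-1)$ contributing $m$ in total, the remaining indices contributing $(m,r^\alpha-1)/n$ each—and then summing over $\alpha$ reproduces the formula, exactly as in the bookkeeping of the proof of Corollary 2.3.

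Either way there is no genuine obstacle here; the heavy lifting already sits in Theorem 2.5 and in the passage deriving (8)–(9). The single point deserving a moment's attention is the verification that the ZM conditions force $d=n$ away from the trivial case $m=1$, so that the primality hypothesis used to pass from (8) to the equality (9) is legitimately in force; after that, the result is immediate.
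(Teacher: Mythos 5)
Your proposal is correct and takes essentially the same route as the paper: since $n$ is prime one has $d=n$, hence $d$ is prime, so the bounds in (8) collapse to the equality (9), and substituting $d=n$ gives the stated formula. Your additional care in checking that the ZM condition $(m,r-1)=1$ forces $d=n$ except in the degenerate case $m=1$ (where both sides equal $n$) closes a point the paper leaves implicit, and your alternative direct computation from (5) and (6) is a valid, if redundant, second confirmation.
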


In particular, for $n=2$ and $r=m-1$ we get
\begin{equation}
k(D_{2m})=\frac{m+3}{2}, \mbox{ for all odd integers } m\geq 3.\nonumber
\end{equation}Note that for even integers $m\geq 2$ this equality becomes
\begin{equation}
k(D_{2m})=\frac{m+6}{2}\,.\nonumber
\end{equation}

The equality (9) can be also used to compute the number of conjugacy classes of ${\rm Dic}_3$, for which we obtain:

\begin{example}
$k({\rm Dic}_3)=6$.\nonumber
\end{example}

Finally, we compute the sizes $|O'_{b^ua^v}|$ and $|O_{b^ua^v}|$ of automorphism-conjugacy class and conjugacy class of an arbitrary element $b^ua^v\in G$.

\begin{theorem}
The following equalities hold:
\begin{itemize}
\item[{\rm a)}] $|O'_{b^ua^v}|=\dd\frac{m\,n\,\varphi\!\left(\dd\frac{(m,[u]_r)}{(m,[u]_r,v)}\right)}{(n,u)\left(\dd\frac{n}{(n,u)}\,,d\right)(m,[u]_r)}$\,;
\item[{\rm b)}] $|O_{b^ua^v}|=\dd\frac{m}{(m,[u]_r)}\,o_{\frac{(m,[u]_r)}{(m,[u]_r,v)}}(r)$.
\end{itemize}
\end{theorem}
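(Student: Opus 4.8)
The plan is to prove both statements with the Orbit--Stabilizer theorem, computing a stabilizer in each case and dividing it into the appropriate group order. For (b) I would use the conjugation action of $G$ on itself, so that $O_{b^ua^v}$ is the conjugacy class and $|O_{b^ua^v}|=[G:C_G(b^ua^v)]=mn/|C_G(b^ua^v)|$; for (a) I would use the action of ${\rm Aut}(G)$ described in Theorem 2.1, so that $|O'_{b^ua^v}|=|{\rm Aut}(G)|/|Stab(b^ua^v)|$ with $|{\rm Aut}(G)|=m\varphi(m)\frac{n}{d}$. In both cases the stabilizer will be cut out by one or two linear congruences modulo $m$ (plus, in (a), a congruence modulo $n$ on $y$), and the whole problem reduces to counting their solutions.

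For (b), I would first record the multiplication rule $a^j b^i=b^i a^{jr^i}$, which follows by induction from $b^{-1}ab=a^r$. Expanding $(b^\alpha a^\beta)(b^ua^v)$ and $(b^ua^v)(b^\alpha a^\beta)$ with this rule, the $b$-exponents always agree and the centralizing condition collapses to the single congruence $\beta(r^u-1)\equiv v(r^\alpha-1)\pmod m$. The key simplification is $(m,r^u-1)=(m,[u]_r)$, valid because $r^u-1=(r-1)[u]_r$ and $(m,r-1)=1$. Writing $g=(m,[u]_r)$, for each $\alpha$ the congruence in $\beta$ is solvable, with exactly $g$ solutions modulo $m$, iff $g\mid v(r^\alpha-1)$, i.e. iff $r^\alpha\equiv1\pmod{g/(g,v)}$, i.e. iff $o_{g/(g,v)}(r)\mid\alpha$; since $g/(g,v)=(m,[u]_r)/(m,[u]_r,v)$ divides $m$, its order divides $d\mid n$, so the number of admissible $\alpha$ in $[0,n)$ is $n/o_{g/(g,v)}(r)$. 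Hence $|C_G(b^ua^v)|=g\cdot n/o_{g/(g,v)}(r)$, and dividing into $mn$ yields the stated formula.

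For (a), I would read off the stabilizer directly from $b^ua^v\mapsto b^{yu}a^{x_1v+x_2[u]_r}$: comparing normal forms, $\varphi_{(x_1,x_2,y)}$ fixes $b^ua^v$ iff $n\mid u(y-1)$ and $m\mid v(x_1-1)+x_2[u]_r$, and these conditions decouple the count into a $y$-count and an $(x_1,x_2)$-count. For $y$, combining $y\equiv1\pmod d$ from (1) with $\frac{n}{(n,u)}\mid(y-1)$ gives $y\equiv1\pmod{[d,\frac{n}{(n,u)}]}$, so the number of admissible $y$ in $[0,n)$ is $n/[d,\frac{n}{(n,u)}]=\frac{(n,u)(\frac{n}{(n,u)},d)}{d}$. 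For $(x_1,x_2)$, I would fix $x_1$ coprime to $m$ and read $m\mid v(x_1-1)+x_2[u]_r$ as a linear congruence in $x_2$ with $(m,[u]_r)$ solutions, solvable exactly when $\frac{(m,[u]_r)}{(m,[u]_r,v)}\mid(x_1-1)$; counting the units $x_1\bmod m$ in this progression and multiplying by $(m,[u]_r)$ gives the number of pairs. Assembling the two counts into $|Stab(b^ua^v)|$ and dividing $m\varphi(m)\frac{n}{d}$ by it produces (a) once the factors $d$ and $\varphi(m)$ cancel.

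The main obstacle is the count of units $x_1$ in the progression $x_1\equiv1\pmod e$ with $e:=\frac{(m,[u]_r)}{(m,[u]_r,v)}\mid m$: I expect to need (and would isolate as a small lemma) the fact that the number of residues modulo $m$ coprime to $m$ lying in a fixed reduced class modulo $e$ equals $\varphi(m)/\varphi(e)$, which is precisely what introduces the factor $\varphi\!\left(\frac{(m,[u]_r)}{(m,[u]_r,v)}\right)$ in (a). The remaining difficulty is purely bookkeeping: keeping the gcd/lcm manipulations straight (notably $n/[d,\frac{n}{(n,u)}]=\frac{(n,u)(\frac{n}{(n,u)},d)}{d}$) and verifying the divisibilities $o_e(r)\mid d\mid n$ that make each count well defined.
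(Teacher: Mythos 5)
Your proposal is correct and follows essentially the same route as the paper: Orbit--Stabilizer plus counting solutions of the congruences $n\mid u(y-1)$ and $m\mid v(x_1-1)+x_2[u]_r$, including the same fiber count $\varphi(m)/\varphi(e)$ for units $x_1\equiv 1\ ({\rm mod}\ e)$ and the same gcd/lcm identity for the $y$-count. The only divergence is in b), where you compute the centralizer $C_G(b^ua^v)$ under the conjugation action of $G$ itself (summing $\alpha$ over $[0,n)$ and dividing into $mn$) rather than the paper's action of ${\rm Inn}(G)$ parametrized by $\alpha\in[0,d)$, $\beta\in[0,m)$ (dividing into $md$); both reduce to the identical congruence $\beta(r^u-1)\equiv v(r^\alpha-1)\ ({\rm mod}\ m)$ and yield the same result, as the paper itself notes via ${\rm Stab}_{{\rm Inn}(G)}(b^ua^v)\cong C_G(b^ua^v)$.
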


\begin{proof}
In both cases we will apply the Orbit-Stabilizer theorem. Also, we will use the notation
\begin{equation}
g=\dd\frac{(m,[u]_r)}{(m,[u]_r,v)}\,.\nonumber
\end{equation}
\begin{itemize}
\item[{\rm a)}] We have
\begin{equation}
|O'_{b^ua^v}|=[{\rm Aut}(G): {\rm Stab}_{{\rm Aut}(G)}(b^ua^v)].\nonumber
\end{equation}An automorphism $\varphi_{(x_1,x_2,y)}$ is contained in ${\rm Stab}_{{\rm Aut}(G)}(b^ua^v)$ if and only if $b^ua^v\in Fix(\varphi_{(x_1,x_2,y)})$, that is if and only if
\begin{equation}
\left\{\barr{lll}
\!\!n\,|\,u(y-1)\\
\!\!m\,|\,v(x_1-1)+x_2[u]_r.\earr\right.\nonumber
\end{equation}Put $y=zd+1$, where $z\in\{0,1,...,\frac{n}{d}-1\}$\,. Then the first condition is equivalent with $\frac{n}{(n,u)}\mid zd$, i.e.\vspace{-2mm} \begin{equation}
\dd\frac{n}{h}\mid z,\vspace{-2mm}\nonumber 
\end{equation}where $h=(n,u)\left(\frac{n}{(n,u)}\,,d\right)$, and so $y$ can be chosen in $\frac{n}{d}:\frac{n}{h}=\frac{h}{d}$ ways. We also have
\begin{align*}
|\{(x_1,x_2)&\in\overline{0,m-1}^{\,2}\mid (x_1,m)=1, m\,|\,v(x_1-1)+x_2[u]_r\}|\\
&=\dd\sum_{^{\,\,\,\,0\leq x_1<m}_{\ (x_1,m)=1}}|\{x_2\in\overline{0,m-1}\mid m\,|\,v(x_1-1)+x_2[u]_r\}|\\
&=\!\!\!\dd\sum_{^{\,\,\,\,\,\,\,\,\,\,\,\,\,\,0\leq x_1<m}_{\ (x_1,m)=1,\, g\,|\,x_1-1}}\!\!\!\!\!(m,[u]_r)\\
&=(m,[u]_r)\,\dd\frac{\varphi(m)}{\varphi(g)}\,.
\end{align*}Thus
\begin{align*}
|O'_{b^ua^v}|&=\dd\frac{|{\rm Aut}(G)|}{|{\rm Stab}_{{\rm Aut}(G)}(b^ua^v)|}\\
&=m\varphi(m)\frac{n}{d}: \dd\frac{h}{d}\,(m,[u]_r)\,\dd\frac{\varphi(m)}{\varphi(g)}\\
&=\dd\frac{m\,n\,\varphi(g)}{h(m,[u]_r)}\,.
\end{align*}
\item[{\rm b)}] We have\vspace{-1mm}
\begin{equation}
|O_{b^ua^v}|=[{\rm Inn}(G): {\rm Stab}_{{\rm Inn}(G)}(b^ua^v)]\vspace{-1mm}\nonumber
\end{equation}and
\begin{align*}
|{\rm Stab}_{{\rm Inn}(G)}(b^ua^v)|&=|\{(\alpha,\beta)\in\overline{0,d-1}\times\overline{0,m-1}\mid b^ua^v\in Fix(\varphi_{(r^{\alpha},\beta(1-r),1)})\}|\\
&=\dd\sum_{0\leq\alpha<d}|\{\beta=\overline{0,m-1}\mid m\,|\,v(r^{\alpha}-1)+\beta(1-r)[u]_r\}|\\
&=\dd\sum_{^{\,\,\,\,\,\,\,\,\,\,\,\,\,0\leq\alpha<d}_{(m,[u]_r)|v(r^{\alpha}-1)}}\!\!\!(m,[u]_r)\\
&=\dd\sum_{^{\,\,\,0\leq\alpha<d}_{\,\,\,g|r^{\alpha}-1}}\!(m,[u]_r)\\
&=\dd\sum_{^{\,\,\,0\leq\alpha<d}_{\,\,\,o_{g}(r)|\alpha}}\!(m,[u]_r)\\
&=\dd\frac{d\,(m,[u]_r)}{o_{g}(r)}\,.
\end{align*}Thus
\begin{align*}
|O_{b^ua^v}|&=\dd\frac{|{\rm Inn}(G)|}{|{\rm Stab}_{{\rm Inn}(G)}(b^ua^v)|}\\
&=\dd\frac{dm}{\dd\frac{d\,(m,[u]_r)}{o_{g}(r)}}\\
&=\dd\frac{m}{(m,[u]_r)}\,o_{g}(r)
\end{align*}
\end{itemize}and the proof of our theorem is complete.
\end{proof}

Since ${\rm Stab}_{{\rm Inn}(G)}(b^ua^v)\cong C_{G}(b^ua^v)$, the above proof also gives the order of the centralizer of $b^ua^v$ in $G$.
\newpage

\noindent{\bf Acknowledgements.} The author is grateful to the reviewer for remarks which improve the previous version of the paper.

\vspace*{5ex}\small

\hfill
\begin{minipage}[t]{5cm}
Marius T\u arn\u auceanu \\
Faculty of  Mathematics \\
``Al.I. Cuza'' University \\
Ia\c si, Romania \\
e-mail: {\tt tarnauc@uaic.ro}
\end{minipage}

\end{document}